\newtheorem{thm}{Theorem}[section]
\newtheorem{obs}[thm]{Observation}
\newtheorem{lem}[thm]{Lemma}
\author{Wei Zhuang\affiliationmark{1}\thanks{Corresponding author.}
\and Guoliang Hao\affiliationmark{2}}
\title{Semitotal domination in trees \thanks{This work is fully supported by NSFC (No. 11301440)
and Natural Science Foundation of Fujian Province
(CN)(2015J05017).}}
\affiliation{
  % one line per affiliation, no postal codes, grant numbers or similar
School of Applied Mathematics, Xiamen University of Technology, P.R.China\\
College of Science, East China University of Technology, P.R.China}
\keywords{domination, semitotal domination, tree}
\begin{document}
\publicationdetails{20}{2018}{2}{5}{4413}
 \maketitle
\begin{abstract}
In this paper, we study a parameter that is squeezed between
arguably the two important domination parameters, namely the
domination number, $\gamma(G)$, and the total domination number,
$\gamma_t(G)$. A set $S$ of vertices in $G$ is a semitotal
dominating set of $G$ if it is a dominating set of $G$ and every
vertex in S is within distance $2$ of another vertex of $S$. The
semitotal domination number, $\gamma_{t2}(G)$, is the minimum
cardinality of a semitotal dominating set of $G$. We observe that
$\gamma(G)\leq \gamma_{t2}(G)\leq \gamma_t(G)$. In this paper, we
give a lower bound for the semitotal domination number of trees and
we characterize the extremal trees. In addition, we characterize
trees with equal domination and semitotal domination numbers.
\end{abstract}

%%-----------------------------------------------------------------------------------------------------------------------------------
\section{Introduction}
\label{sec:in}

Let $G=(V, E)$ be a graph without isolated vertices with vertex set
$V$ of order $n(G)=|V|$ and edge set $E$ of size $m(G)=|E|$, and let
$v$ be a vertex in $V$. The \emph{open neighborhood} of $v$ is
$N(v)=\{u\in V|uv\in E\}$ and the \emph{closed neighborhood} of $v$
is $N[v]=N(v)\cup \{v\}$. The \emph{degree} of a vertex $v$ is
$d(v)=|N(v)|$. For two vertices $u$ and $v$ in a connected graph
$G$, the \emph{distance} $d(u, v)$ between $u$ and $v$ is the length
of a shortest $(u, v)$-path in $G$. The maximum distance among all
pairs of vertices of $G$ is the \emph{diameter} of a graph $G$ which
is denoted by $diam(G)$. A \emph{leaf} of $G$ is a vertex of degree
$1$ and a \emph{support vertex} of $G$ is a vertex adjacent to a
leaf. Denote the sets of leaves and support vertices of $G$ by
$L(T)$ and $S(T)$, respectively. Let $l(T)=|L(T)|$ and
$s(T)=|S(T)|$. A \emph{double star} is a tree that contains exactly
two vertices that are not leaves.

A dominating set in a graph $G$ is a set $S$ of vertices of $G$ such
that every vertex in $V(G)\setminus S$ is adjacent to at least one
vertex in $S$. The domination number of $G$, denoted by $\gamma(G)$,
is the minimum cardinality of a dominating set of $G$. A total
dominating set of a graph $G$ with no isolated vertex is a set $D$
of vertices of $G$ such that every vertex in $V(G)$ is adjacent to
at least one vertex in $D$. The total domination number of $G$,
denoted by $\gamma_t(G)$, is the minimum cardinality of a total
dominating set of $G$. A dominating (total dominating) set of $G$ of
cardinality $\gamma(G)$ ($\gamma_t(G)$) is called a $\gamma(G)$-set
($\gamma_t(G)$-set).

The concept of semitotal domination in graphs was introduced and
studied by Goddard, Henning and McPillan \cite{Goddard}. A set $S$
of vertices in a graph $G$ with no isolated vertices is a semitotal
dominating set of $G$ if it is a dominating set of $G$ and every
vertex in $S$ is within distance $2$ of another vertex of $S$. The
semitotal domination number, denoted by $\gamma_{t2}(G)$, is the
minimum cardinality of a semitotal dominating set of $G$. A
semitotal dominating set of $G$ of cardinality $\gamma_{t2}(G)$ is
called a $\gamma_{t2}(G)$-set. Clearly, for every graph $G$ with no
isolated vertex, $\gamma(G)\leq \gamma_{t2}(G)\leq \gamma_t(G)$. If
the graph $G$ is clear from the context, we simply write
$\gamma$-set and $\gamma_{t2}$-set rather than $\gamma(G)$-set and
$\gamma_{t2}(G)$-set, respectively.

An area of research in domination of graphs that has received
considerable attention is the study of classes of graphs with equal
domination parameters. For any two graph theoretic parameters
$\lambda$ and $\mu$, $G$ is called a $(\lambda, \mu)$-graph if
$\lambda(G)=\mu(G)$. The class of $(\gamma, \gamma_t)$-trees, that
is trees with equal domination and total domination numbers, was
characterized in \cite{Hou}. In \cite{Haynes}, the authors provided
a constructive characterizations of trees with equal domination and
paired domination numbers. More results in this area were
investigated in \cite{Krishnakumari, Li, Krzywkowski, Brause} and
elsewhere. Motivated by these results, we aim to characterize trees
with equal domination and semitotal domination numbers. In addition,
we give a lower bound for the semitotal domination number of trees
and we characterize the extremal trees.

%%-----------------------------------------------------------------------------------------------------------------------------------
\section{A lower bound for semitotal domination number of trees}
\label{sec:a low}

In this section we give a lower bound for the semitotal domination
number of trees and we characterize the extremal trees. First, we
shall need the following two observations.

\begin{obs}
Let $G$ be a connected graph that is not a star. Then,

$(i)$ there is a $\gamma$-set of $G$ that contains no leaf, and

$(ii)$\cite{Henning1} there is a $\gamma_{t2}$-set of $G$ that
contains no leaf.
\end{obs}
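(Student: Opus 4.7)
My plan is to prove part (i) by a direct leaf-swap argument and to appeal to \cite{Henning1} for part (ii).

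For (i), I would take a $\gamma$-set $D$ chosen so as to contain as few leaves of $G$ as possible, and then argue by contradiction that this number is zero. Suppose some leaf $v\in D$ and let $u$ be its unique neighbor. The key preliminary observation is that $u$ cannot itself be a leaf: otherwise, since $v$ is already a leaf whose only neighbor is $u$, the set $\{u,v\}$ would form a connected component of $G$, forcing $G=K_{1,1}$ and contradicting the hypothesis that $G$ is not a star. Hence $u$ has degree at least $2$.

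Next I would compare $D$ with $D'=(D\setminus\{v\})\cup\{u\}$. If $u$ already belongs to $D$, then $D\setminus\{v\}$ is itself a dominating set of $G$ (the only vertex that $v$ could have dominated besides itself is $u$, which is still present), contradicting the minimality of $|D|$. Otherwise $|D'|=|D|$, and the same observation shows that $D'$ dominates $G$, so $D'$ is a $\gamma$-set. Since $u$ is not a leaf while $v$ is, $D'$ contains strictly fewer leaves than $D$, contradicting the extremal choice of $D$. Either way we reach a contradiction, so the chosen $D$ contains no leaf of $G$.

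For (ii), since the statement is attributed to \cite{Henning1}, I would simply invoke that reference. The argument there proceeds by the same leaf-to-support-vertex replacement, but it additionally has to verify that after the swap the within-distance-$2$ condition imposed on the elements of a semitotal dominating set still holds for every member of the modified set; this extra verification is the only substantive difference from (i) and is carried out in the cited paper. The only genuine obstacle in (i) is the edge case of ruling out $u$ being a leaf, which is precisely the place where the hypothesis that $G$ is not a star is needed; after that, the proof is a one-step swap with no further structural analysis required.
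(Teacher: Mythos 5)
Your argument is correct: the extremal choice of a $\gamma$-set with fewest leaves, the observation that the neighbor $u$ of a leaf $v\in D$ cannot itself be a leaf (else $G=K_{1,1}$, a star), and the swap $(D\setminus\{v\})\cup\{u\}$ together give part $(i)$, while part $(ii)$ is indeed just the cited result. The paper states this as an observation without proof, and your leaf-to-support-vertex replacement is exactly the standard argument intended.
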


\begin{thm}
If $T$ is a tree of order $n(T)\geq 2$ with $l(T)$ leaves, then
$\gamma_{t2}(T)\geq \frac{2[n(T)-l(T)+2]}{5}$.
\end{thm}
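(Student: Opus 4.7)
\bigskip
\noindent\textbf{Proof plan.}

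The plan is to induct on $n(T)$, with the inequality rewritten as $f(T)\geq 0$, where
\[
f(T):=5\gamma_{t2}(T)-2\bigl(n(T)-l(T)+2\bigr).
\]
For the base cases I handle all trees of diameter at most $3$---namely $K_2$, stars, and double stars---directly, noting that each has $\gamma_{t2}(T)=2$ and $n(T)-l(T)\leq 2$, so $f(T)\geq 10-8=2$; the remaining trees of order at most $5$ are checked individually (the only new case is $P_5$, with $f(P_5)=0$).

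For the inductive step let $T$ satisfy $n(T)\geq 6$ and $\mathrm{diam}(T)\geq 4$. Fix a longest path $v_0v_1\cdots v_d$ with $d\geq 4$, rooted at $v_0$, and by Observation~1(ii) fix a $\gamma_{t2}$-set $S$ of $T$ that avoids all leaves. In each case below I construct a smaller tree $T'$ by deleting a small subtree, verify $f(T)\geq f(T')$, and finish via the inductive hypothesis $f(T')\geq 0$.

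Two main reductions dispose of most configurations. Reduction (R1): if some support vertex has two or more leaf neighbors, delete one such leaf $u$. Since $u\notin S$, the set $S$ is still a semitotal dominating set of $T'=T-u$, so $\gamma_{t2}(T')\leq\gamma_{t2}(T)$; together with $\Delta n=\Delta l=-1$, this yields $f(T)-f(T')=5(\gamma_{t2}(T)-\gamma_{t2}(T'))\geq 0$. This reduction lets me assume every support has exactly one leaf, which forces $v_{d-1}$ to be a degree-$2$ vertex. Reduction (R2): if $v_{d-2}$ has a leaf child, apply (R1); if instead $v_{d-2}$ has another non-leaf child---necessarily a cherry on vertices $v_{d-2}, w, w'$ by the longest-path property---delete $\{v_{d-1},v_d\}$. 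Then $v_{d-1}\in S$, and $(S\setminus\{v_{d-1}\})\cup\{v_{d-2}\}$ is a semitotal dominating set of $T'$ of size at most $|S|$, so $\gamma_{t2}(T')\leq\gamma_{t2}(T)$, and $\Delta n=-2,\ \Delta l=-1$ give $f(T)-f(T')\geq 2$.

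The remaining case is that $v_{d-2}$ has $v_{d-1}$ as its only child; I then iterate the analysis at $v_{d-3},v_{d-4},\ldots$ until either one of the reductions triggers or I identify a pendant path of length at least $4$ ending at $v_d$. In the pendant-path case I delete the $P_4$ on $\{v_d,v_{d-1},v_{d-2},v_{d-3}\}$, giving $\Delta n=-4$ and $\Delta l=-1$. Augmenting the restriction of $S$ to $V(T')$ by the attachment vertex $v_{d-4}$ together with one of its remaining neighbors produces a semitotal dominating set of $T'$ of cardinality at most $|S|+1$, so $\gamma_{t2}(T')\leq\gamma_{t2}(T)+1$ and hence $f(T)-f(T')\geq 5(-1)+6=1\geq 0$. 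The main obstacle is this pendant-path case: one must verify that, after deletion, no surviving $S$-vertex loses all of its distance-$2$ partners and that the two added vertices at $v_{d-4}$ restore the semitotal property throughout $T'$. The longest-path choice, combined with the earlier reductions, is what confines $T$ to this pendant-path structure at this stage, and the numerical margin $5(-1)+6\geq 0$ is precisely what makes the potential argument tight enough to close.
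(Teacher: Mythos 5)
Your overall strategy (induction on $n(T)$ via the potential $f(T)=5\gamma_{t2}(T)-2(n(T)-l(T)+2)$, reductions guided by a longest path and a leaf-free $\gamma_{t2}$-set) is the same as the paper's, and your base cases and reduction (R1) are sound. The problem is the accounting in the other two reductions: you have the sign of the $n-l$ contribution reversed. Writing $\Delta\gamma=\gamma_{t2}(T)-\gamma_{t2}(T')$, one has
\[
f(T)-f(T')=5\,\Delta\gamma-2\bigl[(n(T)-l(T))-(n(T')-l(T'))\bigr].
\]
In the cherry case of (R2) you delete one leaf and one non-leaf, so $n-l$ drops by $1$ and $f(T)-f(T')=5\Delta\gamma-2$; since you only exhibit a semitotal dominating set of $T'$ of size at most $|S|$ (and $(S\setminus\{v_{d-1}\})\cup\{v_{d-2}\}$ really can have size exactly $|S|$ when $v_{d-2}\notin S$), you only get $\Delta\gamma\geq 0$ and hence $f(T)-f(T')\geq -2$, not $\geq 2$; you would need to certify $\gamma_{t2}(T')\leq\gamma_{t2}(T)-1$. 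The pendant-$P_4$ case is worse: $n-l$ drops by $3$ (or by $4$ if $v_{d-4}$ becomes a leaf), so $f(T)-f(T')=5\Delta\gamma-6$ (or $-8$), and you need $\gamma_{t2}(T')\leq\gamma_{t2}(T)-2$; your bound $\gamma_{t2}(T')\leq\gamma_{t2}(T)+1$ yields only $f(T)-f(T')\geq -11$, which proves nothing. A sanity check: for $T=P_9$ and $T'=P_5$ the required lower bounds are $18/5$ and $10/5$, so the induction genuinely forces $\gamma_{t2}$ to drop by $2$ across that deletion.

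The repair is to choose moves for which the needed strict drop in $\gamma_{t2}$ can be certified, which is what the paper does. In the two-cherry case it deletes the two \emph{leaves} $v_d$ and $w'$ (so $n$ drops by $2$ while $l$ is unchanged, the two supports becoming new leaves) and replaces the two supports in the $\gamma_{t2}$-set by $v_{d-2}$, certifying a drop of $1$ against a decrease of $2$ in $n-l$. In the pendant-path case it splits according to whether some vertex of $S\setminus\{v_{d-1},v_{d-3}\}$ lies within distance $2$ of $v_{d-3}$: if so, it deletes only $\{v_d,v_{d-1}\}$ ($n-l$ drops by $2$ and $S\setminus\{v_{d-1}\}$ certifies a drop of $1$); if not, it deletes the five vertices $v_d,\dots,v_{d-4}$, and $S\setminus\{v_{d-1},v_{d-3}\}$ certifies a drop of $2$ against a decrease of at most $5$ in $n-l$. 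Your four-vertex deletion with two vertices added back cannot be salvaged by a sharper analysis of the same move; different moves are needed. (A smaller point: invoking (R1) when $v_{d-2}$ has a single leaf child does not match (R1) as you stated it, although the same computation goes through because $v_{d-2}$ then has degree at least $3$ and does not become a leaf.)
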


\begin{proof}
We use induction on $n(T)$. It is easy to see that the result
holds for a tree of order $n\leq 8$. Let $T$ be a tree of order
$n>8$ and assume that $\gamma_{t2}(T')\geq
\frac{2[n(T')-l(T')+2]}{5}$ for each tree $T'$ with order at most
$n-1$. We consider the case that $diam(T)\geq 4$. Otherwise, $T$ is
a star or double-star, then $\gamma_{t2}(T)$ has the desired
property in theorem. By Observation~2.1(ii), we can obtain a
$\gamma_{t2}$-set of $T$, say $D$, which contains no leaf.

{\flushleft\textbf{Claim 1.}}\quad For any vertex $v\in
V(T)\setminus L(T)$, $v$ has only one leaf-neighbor when
$|N(v)\setminus L(T)|=1$, and $v$ is not a support vertex when
$|N(v)\setminus L(T)|\geq 2$.

\begin{proof}
If $v$ is a vertex that has at least two leaf-neighbors and
$|N(v)\setminus L(T)|=1$. We remove one of those leaves and denote
the resulting tree by $T'$. It is easy to observe that
$\gamma_{t2}(T')=\gamma_{t2}(T)$. By induction, $\gamma_{t2}(T')\geq
\frac{2[n(T')-l(T')+2]}{5}$. And consequently $\gamma_{t2}(T)\geq
\frac{2[n(T)-l(T)+2]}{5}$ as $l(T')=l(T)-1$, $n(T')=n(T)-1$.

If $v$ is a support vertex and $|N(v)\setminus L(T)|\geq 2$, we
remove a leaf-neighbor of $v$ and the semitotal domination number of
the resulting tree is no greater than that of $T$. Analogously to
the previous case, $\gamma_{t2}(T)$ has the desired property in
theorem. \end{proof}

In other words, each support vertex of $T$ has degree two. Let
$P=v_0v_1v_2 \cdots v_t$ be a longest path in $T$ such that

(i) $d(v_3)$ as large as possible, and subject to this condition

(ii) $d(v_2)$ as large as possible.

By Claim~1, $d(v_1)$=2 and $v_2$ is not a support vertex.

{\flushleft\textbf{Claim 2.}}\quad $d(v_2)=2$.

\begin{proof}
If $d(v_2)>2$, it follows from the choice of $P$ and Claim~1 that
all neighbors of $v_2$ are support vertices of degree two, except
possibly the vertex $v_3$.

Let $u_1$ be a neighbor of $v_2$ outside $P$, $u_2$ be the leaf that
adjacent to $u_1$, and $T'=T-\{v_0, u_2\}$. By induction,
$\gamma_{t2}(T')\geq \frac{2[n(T')-l(T')+2]}{5}$. In addition,
replacing the vertices $u_1$ and $v_1$ in $D$ with $v_2$(If $v_2\in
D$, take $D\setminus \{v_1\}$ instead), we can obtain a semitotal
dominating set of $T'$. That is, $\gamma_{t2}(T')\leq
\gamma_{t2}(T)-1$. Note that $l(T')=l(T)$, $n(T')=n(T)-2$.
Therefore, $\gamma_{t2}(T)\geq \frac{2[n(T)-l(T)+2]}{5}$.
\end{proof}

We know that $v_1\in D$ and exactly one of $v_2$ and $v_3$ belongs
to $D$. Without loss of generality, $v_3\in D$ (Otherwise, we
replace the vertex $v_2$ in $D$ with $v_3$, and the resulting set is
also a $\gamma_{t2}$-set of $T$).

{\flushleft\textbf{Claim 3.}}\quad $d(v_3)=2$.

\begin{proof}
By Claim~1 and the assumption that $n>8$, $v_3$ is not a support
vertex. If $d(v_3)>2$, it follows from the choice of $P$ and Claim~1
that $v_3$ has a neighbor of degree two outside $P$, say $v_2'$,
which is either a support vertex or adjacent to a support vertex
outside $P$, say $v_1'$.

In the former case, we have that $\{v_1, v_3, v_2'\}\subseteq D$.
And in the latter case, we have that $\{v_1, v_3, v_1'\}\subseteq
D$. Let $T'=T-\{v_0, v_1\}$. By induction, $\gamma_{t2}(T')\geq
\frac{2[n(T')-l(T')+2]}{5}$. In either case, we have that
$l(T')=l(T)$, $n(T')=n(T)-2$ and it is easy to see that
$\gamma_{t2}(T')\leq \gamma_{t2}(T)-1$. Therefore,
$\gamma_{t2}(T)\geq \frac{2[n(T)-l(T)+2]}{5}$.\end{proof}

{\flushleft\textbf{Claim 4.}}\quad $d(v_4)=2$.

\begin{proof}
By Claim~1 and the assumption that $n>8$, $v_4$ is not a support
vertex. If $d(v_4)>2$, from the choice of $P$ and Claim~1, we only
need to consider the case as follows: $v_4$ has a neighbor outside
$P$, say $v_3'$, which is adjacent to $t$ support vertices $u_1,
u_2, \cdots, u_t$, where $t\geq 2$. (In other cases, we always have
that $\gamma_{t2}(T')\leq \gamma_{t2}(T)-1$, $l(T')=l(T)$ and
$n(T')=n(T)-2$, where $T'=T-\{v_0, v_1\}$. And similar to the proof
of Claim~3, $\gamma_{t2}(T)$ has the desired property in theorem.)
Let $u_i'$ be the leaf-neighbor of $u_i$, where $i=1, 2, \cdots, t$.
Let $T'=T-\{u_1', u_2', \cdots, u_t'\}$. By induction,
$\gamma_{t2}(T')\geq \frac{2[n(T')-l(T')+2]}{5}$. Note that $\{u_1,
u_2, \cdots, u_t\}\subseteq D$. Then $(D\setminus \{u_1, u_2,
\cdots, u_t\})\cup \{v_3'\}$ is a semitotal dominating set of $T'$.
That is, $\gamma_{t2}(T')\leq \gamma_{t2}(T)-t+1$. In addition,
$l(T')=l(T)$, $n(T')=n(T)-t$. Hence, $\gamma_{t2}(T)\geq
\frac{2[n(T')-l(T')+2]}{5}+t-1=\frac{2[n(T)-t-l(T)+2]}{5}+t-1>\frac{2[n(T)-l(T)+2]}{5}$.
\end{proof}

Note $v_1, v_3\in D$. Then, one of the two cases as following holds:
(1) Each vertex of $D\setminus \{v_1, v_3\}$ is at distance at least
$3$ from $v_3$; (2) There is a vertex of $D\setminus \{v_1, v_3\}$
which is within distance $2$ of $v_3$.

In the former case, let $T'=T-\{v_0, v_1, v_2, v_3, v_4\}$. By
induction, $\gamma_{t2}(T')\geq \frac{2[n(T')-l(T')+2]}{5}$. In
addition, note that $D\setminus \{v_1, v_3\}$ is a semitotal
dominating set of $T'$, $n(T)=n(T')+5$, $l(T)\geq l(T')$. Hence,
$\gamma_{t2}(T)\geq \frac{2[n(T)-l(T)+2]}{5}$.

In the latter case, let $T'=T-\{v_0, v_1\}$. By induction,
$\gamma_{t2}(T')\geq \frac{2[n(T')-l(T')+2]}{5}$. Since $D\setminus
\{v_1\}$ is a semitotal dominating set of $T'$, $n(T)=n(T')+2$,
$l(T)=l(T')$. Hence, $\gamma_{t2}(T)\geq \frac{2[n(T)-l(T)+2]}{5}$.

The proof is completed. \end{proof}

Next, we are ready to provide a constructive characterization of the
trees achieving equality in the bound of Theorem~2.2. For our
purposes we define a \emph{labeling} of a tree $T$ as a partition
$S=(S_A, S_B, S_C)$ of $V(T)$ (This idea of labeling the vertices is
introduced in \cite{Dorfling}). We will refer to the pair $(T, S)$
as a \emph{labeled tree}. The label or \emph{status} of a vertex
$v$, denoted sta$(v)$, is the letter $x\in \{A, B, C\}$ such that
$v\in S_x$.

Let $\mathscr{T}$ be the family of labeled trees that: (i) contains
$(P_5, S')$ where $S'$ is the labeling that assigns to the two
support vertices of the path $P_5$ status $A$, to the two leaves
status $C$ and to the center vertex status $B$ (see Fig.1(a)); and
(ii) is closed under the two operations $\mathscr{O}_1$ and
$\mathscr{O}_2$ that are listed below, which extend the tree $T'$ to
a tree $T$ by attaching a tree to the vertex $v\in V(T')$.

{\bf Operation} $\mathscr{O}_1$: Let $v$ be a vertex with sta$(v)=A
$. Add a vertex $u$ and the edge $uv$. Let sta$(u)=C$.

{\bf Operation} $\mathscr{O}_2$: Let $v$ be a vertex with sta$(v)=C$
that has degree one. Add a path $u_1u_2u_3u_4u_5$ and the edge
$u_1v$. Let sta$(u_1)=$sta$(u_5)=C$, sta$(u_2)=$sta$(u_4)=A$,
sta$(u_3)=B$.

The two operations $\mathscr{O}_1$ and $\mathscr{O}_2$ are
illustrated in Fig.1(b), (c).

\begin{figure}[htbp!]
\centering
\includegraphics[height=6cm]{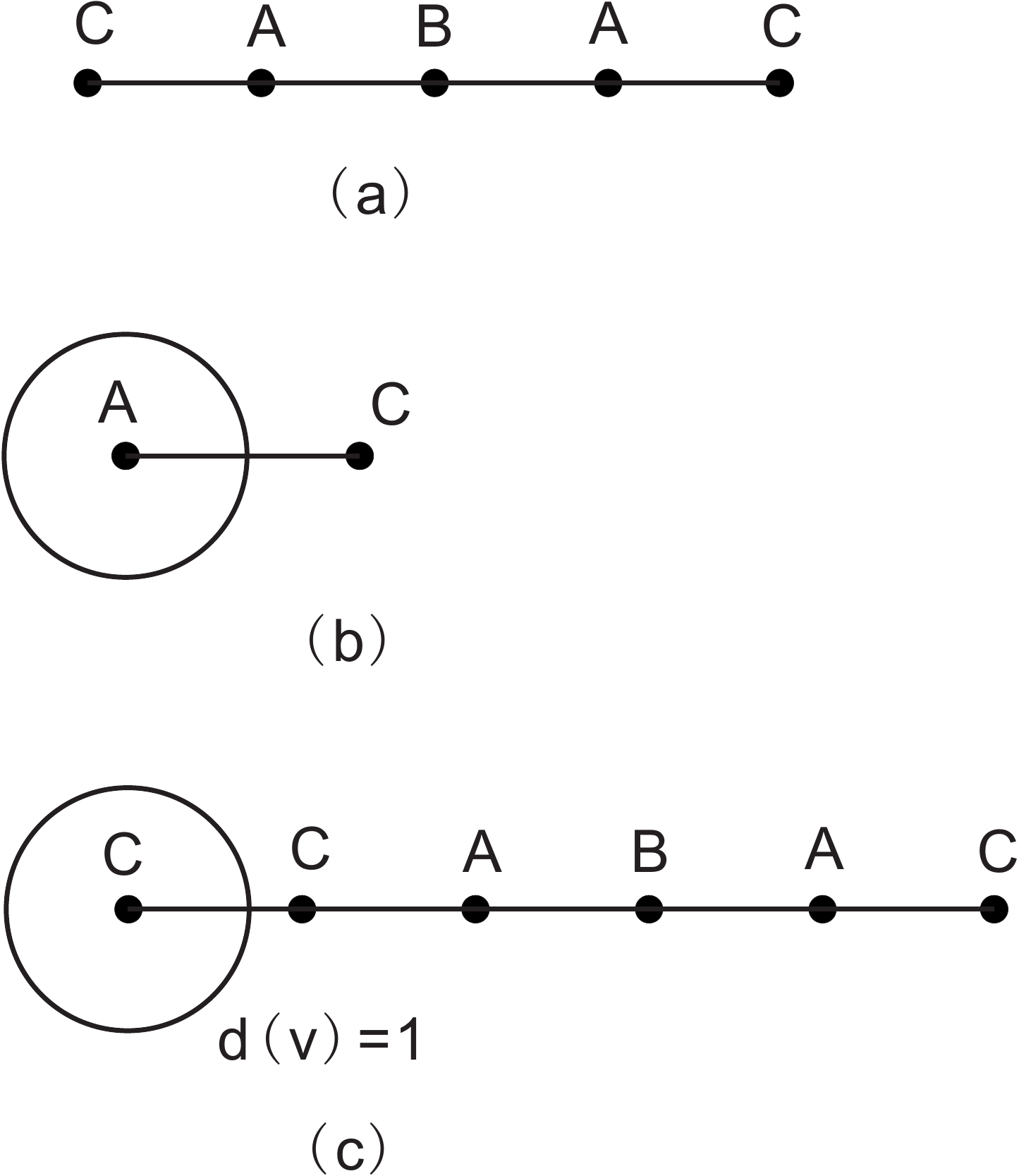}
\caption{\label{Fig.1}}
\end{figure}

Let $(T, S)\in \mathscr{T}$ be a labeled tree for some labeling $S$.
Then there is a sequence of labeled trees $(T_0, S_0)$, $(T_1, S_1),
\cdots, (T_{k-1}, S_{k-1})$, $(T_k, S_k)$ such that $(T_0,
S_0)=(P_5, S')$, $(T_k, S_k)=(T, S)$. The labeled tree $(T_i, S_i)$
can be obtained from $(T_{i-1}, S_{i-1})$ by one of the operations
$\mathscr{O}_1$ and $\mathscr{O}_2$, where $i\in \{1, 2, \cdots,
k\}$. We call the number of terms in such a sequence of labeled
trees that is used to construct $(T, S)$, the \emph{length} of the
sequence. Clearly, the above sequence has length $k$. We remark that
a sequence of labeled trees used to construct $(T, S)$ is not
necessarily unique.

We take an example to make it easier for reader to understand the
family $\mathscr{T}$. In Fig.2, $(P_5, S')\in \mathscr{T}$, $(H_1,
S_1)$ is obtained from $(P_5, S')$ by operation $\mathscr{O}_2$,
$(H_2, S_2)$ is obtained from $(H_1, S_1)$ by repeated applications
of operation $\mathscr{O}_1$, and $(H_3, S)$ is obtained from $(H_2,
S_2)$ by operation $\mathscr{O}_2$. Thus, $(H_1, S_1)$, $(H_2,
S_2)$, $(H_3, S)\in \mathscr{T}$. For $T\in \{P_5, H_1, H_2, H_3\}$,
it is easy to see that the set, say $D$, consisting of the vertices
labeled $A$ in $T$ is a $\gamma_{t2}$-set of $T$. In particular,
$|D|=\frac{2[n(T)-l(T)+2]}{5}$.

Before presenting our main result, we present a few preliminary
results and observations.

\begin{figure}[htbp!]
\centering
\includegraphics[height=7cm]{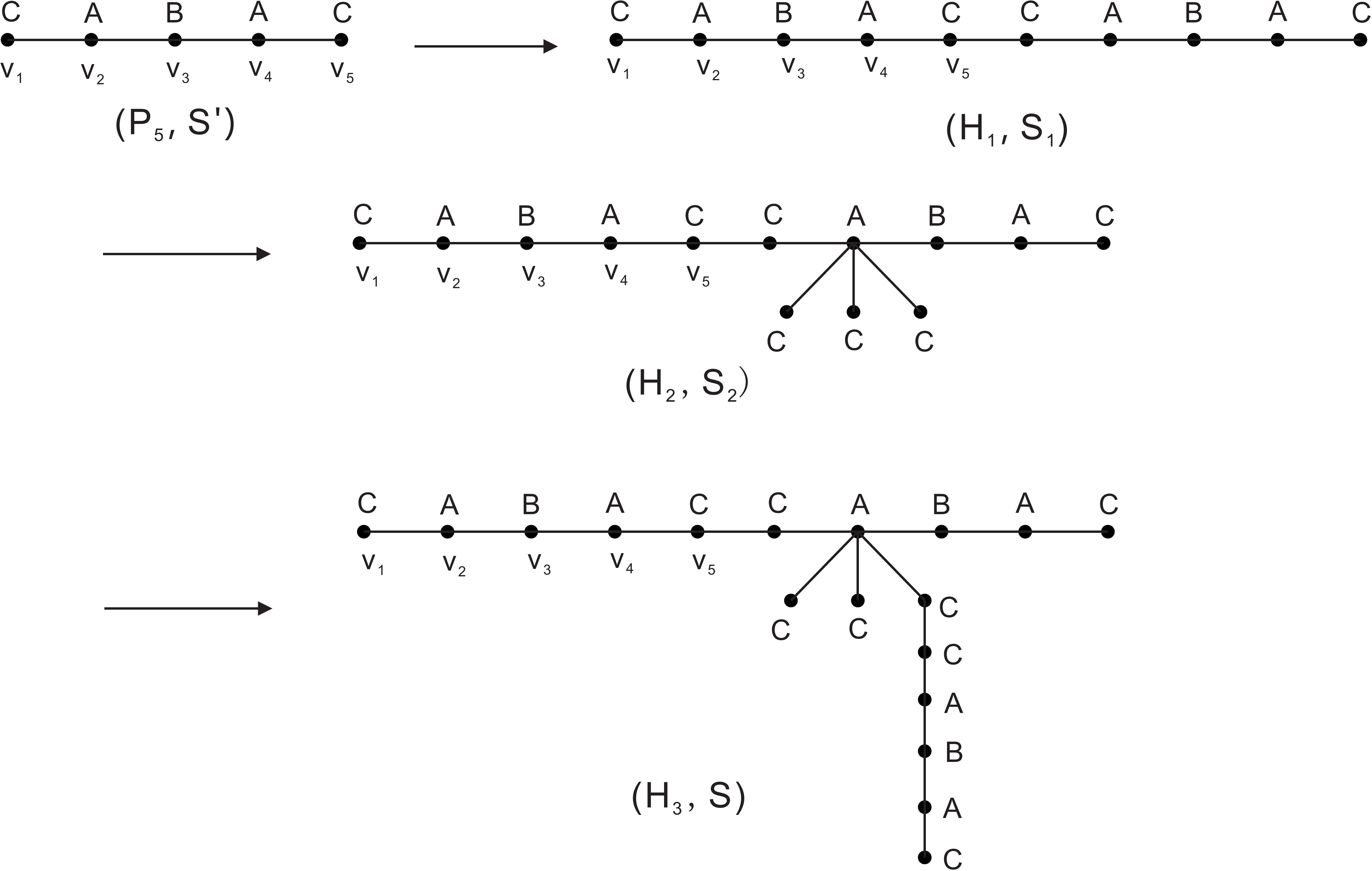}
\caption{\label{Fig.2}}
\end{figure}

\begin{obs}
Let $T$ be a tree and let $S$ be a labeling of $T$ such that $(T,
S)\in \mathscr{T}$. Then, $T$ has the following properties:

$(a)$ Every support vertex is labeled $A$ and every leaf is labeled
$C$.

$(b)$ $|S_A|=2|S_B|$.

$(c)$ The set $S_A$ is a semitotal dominating set of $T$.

$(d)$ The set $S_A$ and $S_B$ are independent sets.

$(e)$ Every vertex labeled $B$ has degree two and its neighbors
labeled $A$.
\end{obs}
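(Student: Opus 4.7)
The plan is to establish properties (a)--(e) simultaneously by induction on the length $k$ of a construction sequence $(T_0,S_0),(T_1,S_1),\ldots,(T_k,S_k)=(T,S)$ with $(T_0,S_0)=(P_5,S')$. For the base case $k=0$ I would simply read all five properties off the labeled $P_5$: the two support vertices are labeled $A$ and the two leaves $C$, so (a) holds; $|S_A|=2=2|S_B|$ gives (b); the two $A$-vertices dominate every vertex and lie at distance exactly $2$, which is (c); $S_A$ is a pair of non-adjacent vertices and $S_B$ is a singleton, giving (d); and the central $B$-vertex has degree two with both neighbors labeled $A$, giving (e).

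For the inductive step, assume the properties hold for $(T',S')=(T_{k-1},S_{k-1})$ and analyze the two operations separately. Operation $\mathscr{O}_1$ attaches only a single new leaf $u$ labeled $C$ to an existing $A$-vertex $v$. No vertex receives label $A$ or $B$, no existing vertex changes its label, and no $B$-vertex has its neighborhood touched; so (b), (d), (e) and the within-distance-$2$ part of (c) transfer immediately from the inductive hypothesis. Meanwhile $u$ is dominated by $v\in S_A$, $u$ is the only new leaf and is labeled $C$, and $v$ becomes (or remains) a support vertex carrying the required label $A$, handling (a) and the dominating part of (c).

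Operation $\mathscr{O}_2$ is more substantive but still mechanical: the new path $u_1u_2u_3u_4u_5$ contributes labels $C,A,B,A,C$ in order, so $|S_A|$ grows by $2$ and $|S_B|$ by $1$, preserving (b). For (a), the only new leaf is $u_5$ (labeled $C$) and the only new support vertex is $u_4$ (labeled $A$); the previous leaf $v$ becomes an internal vertex of degree $2$. For (c), the new $A$-vertices $u_2,u_4$ dominate all of $\{u_1,\ldots,u_5\}$, they lie at distance $2$ from each other, and the pre-existing $S_A$-vertices are untouched. For (d) and (e), direct inspection of the attached path shows $u_2,u_4$ are non-adjacent with no $A$-neighbor in $T'$ (their common neighbor $u_3$ is labeled $B$), and $u_3$ has degree $2$ with both neighbors in $S_A$.

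I do not expect a real obstacle, since the argument is essentially bookkeeping on the labels. The one point that requires a moment's care is in $\mathscr{O}_2$: when $v$ stops being a leaf, its former neighbor in $T'$ may stop being a support vertex of $T$. This is harmless because (a) only constrains the labels of vertices that are support vertices or leaves of the \emph{current} tree, and since neither operation ever rewrites a label, any vertex that remains a support vertex in $T$ still carries its $A$-label from $T'$ by the inductive hypothesis.
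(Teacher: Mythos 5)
Your proof is correct: the induction on the length of the construction sequence goes through for all five properties, and you correctly handle the one subtle point (a vertex of $T'$ losing its support-vertex status under $\mathscr{O}_2$, which is harmless since (a) constrains only current support vertices and leaves). The paper states this result as an observation without proof, but your argument is exactly the natural one and mirrors the induction the paper itself uses in the proof of Lemma~2.4 for the count $|S_A|=\frac{2[n(T)-l(T)+2]}{5}$.
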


\begin{lem}
Let $T$ be a tree and let $S$ be a labeling of $T$ such that $(T,
S)\in \mathscr{T}$. Then, $\gamma_{t2}(T)=
\frac{2[n(T)-l(T)+2]}{5}$.
\end{lem}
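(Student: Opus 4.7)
The plan is to combine the lower bound from Theorem~2.2 with an upper bound obtained from Observation~2.3. Since Observation~2.3(c) states that $S_A$ is a semitotal dominating set of $T$, we immediately get $\gamma_{t2}(T) \leq |S_A|$. So it suffices to prove the single identity $|S_A| = \frac{2[n(T)-l(T)+2]}{5}$; combining this with the inequality $\gamma_{t2}(T) \geq \frac{2[n(T)-l(T)+2]}{5}$ from Theorem~2.2 then closes the lemma.

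To establish this identity, I would induct on the length $k$ of a construction sequence $(T_0, S_0), (T_1, S_1), \ldots, (T_k, S_k) = (T, S)$ witnessing $(T, S) \in \mathscr{T}$. The base case $k = 0$ reduces to $(P_5, S')$, where by inspection $|S_A| = 2$ and $\frac{2[5 - 2 + 2]}{5} = 2$, so the identity holds.

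For the inductive step, I would track how $n(T)$, $l(T)$ and $|S_A|$ change under each operation, assuming the identity holds for $T_{k-1}$. Under $\mathscr{O}_1$, a new leaf labeled $C$ is attached to a vertex $v \in S_A$; by Observation~2.3(a), $v$ is a support vertex (hence not a leaf of $T_{k-1}$), so $n$ increases by one, $l$ increases by one, $|S_A|$ is unchanged, and $n - l$ is preserved, keeping the identity intact. Under $\mathscr{O}_2$, a path $u_1 u_2 u_3 u_4 u_5$ is attached at a degree-one vertex $v$ labeled $C$; the vertex $v$ loses its leaf status while $u_5$ becomes a new leaf (the internal $u_1, u_2, u_3, u_4$ all have degree two), so $l$ is unchanged, $n$ increases by five, and $|S_A|$ grows by exactly two (the vertices $u_2$ and $u_4$). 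Since $\frac{2 \cdot 5}{5} = 2$, the identity is again preserved.

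The work is almost entirely bookkeeping; the one subtlety I expect to be the main obstacle is correctly accounting for the leaf swap under $\mathscr{O}_2$, namely that $v$ ceases to be a leaf while $u_5$ becomes one, so $l$ remains constant and $n - l$ grows by precisely five. Once that is verified, the identity propagates through the sequence, and the lemma follows immediately by sandwiching $\gamma_{t2}(T)$ between Theorem~2.2 and $|S_A|$.
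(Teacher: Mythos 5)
Your proposal is correct and follows essentially the same route as the paper: an induction on the length of the construction sequence showing $|S_A|=\frac{2[n(T)-l(T)+2]}{5}$, with the same bookkeeping ($n$ and $l$ both up by one under $\mathscr{O}_1$; $n$ up by five, $l$ unchanged, $|S_A|$ up by two under $\mathscr{O}_2$), then sandwiching $\gamma_{t2}(T)$ between Observation~2.3(c) and Theorem~2.2. One tiny slip: Observation~2.3(a) does not say every $A$-labeled vertex is a support vertex (that is its converse), but the fact you actually need --- that $v$ is not a leaf --- does follow from 2.3(a), since every leaf is labeled $C$ while $v$ is labeled $A$.
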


\begin{proof}
First, we are ready to show that $|S_A|=\frac{2[n(T)-l(T)+2]}{5}$.
We proceed by induction on the length $k$ of a sequence required to
construct the labeled tree $(T, S)$.

When $k=0$, $(T, S)=(P_5, S')$, and so $|S_A|=2$. This establishes
the base case. Let $k\geq 1$ and assume that if the length of
sequence used to construct a labeled tree $(T^{*}, S^{*})\in
\mathscr{T}$ is less than $k$, then
$|S^{*}_A|=\frac{2[n(T^{*})-l(T^{*})+2]}{5}$. Now, $(T, S)\in
\mathscr{T}$ and let $(T_0, S_0)$, $(T_1, S_1), \cdots, (T_{k-1},
S_{k-1})$, $(T_k, S_k)$ be a sequence of length $k$ used to
construct $(T, S)$, where $(T_0, S_0)=(P_5, S')$, $(T_k, S_k)=(T,
S)$, $(T_i, S_i)$ can be obtained from $(T_{i-1}, S_{i-1})$ by one
of the operations $\mathscr{O}_1$ and $\mathscr{O}_2$, $i\in \{1, 2,
\cdots, k\}$.  Let $T^{*}=T_{k-1}$ and $S^{*}=S_{k-1}$. Note that
$(T_{k-1}, S_{k-1})\in \mathscr{T}$. By the inductive hypothesis,
$|S^{*}_A|=\frac{2[n(T^{*})-l(T^{*})+2]}{5}$. $(T, S)$ can be
obtained from $(T^{*}, S^{*})$ by operation $\mathscr{O}_1$ or
$\mathscr{O}_2$.

In the former case, we have that $n(T)=n(T^{*})+1$,
$l(T)=l(T^{*})+1$, and $|S_A|=|S^{*}_A|$. Thus,
$|S_A|=\frac{2[n(T^{*})-l(T^{*})+2]}{5}=\frac{2[n(T)-1-l(T)+1+2]}{5}=\frac{2[n(T)-l(T)+2]}{5}$.

In the latter case, we have that $n(T)=n(T^{*})+5$, $l(T)=l(T^{*})$
and $|S_A|=|S^{*}_A|+2$. Thus,
$|S_A|=\frac{2[n(T^{*})-l(T^{*})+2]}{5}+2=\frac{2[n(T)-5-l(T)+2]}{5}+2=\frac{2[n(T)-l(T)+2]}{5}$.

By Observation~2.3(c), we have that $\gamma_{t2}(T)\leq
\frac{2[n(T)-l(T)+2]}{5}$. Combining Theorem~2.2, we conclude that
$\gamma_{t2}(T)= \frac{2[n(T)-l(T)+2]}{5}$. Moreover, $S_A$ is a
$\gamma_{t2}$-set of $T$. \end{proof}

\begin{thm}
Let $T$ be a nontrivial tree, then
$\gamma_{t2}(T)=\frac{2[n(T)-l(T)+2]}{5}$ if and only if $(T, S)\in
\mathscr{T}$ for some labeling $S$.
\end{thm}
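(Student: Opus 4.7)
The sufficiency direction is exactly Lemma~2.4. For necessity, I would proceed by induction on $n(T)$. A quick enumeration of trees of order at most $5$ shows that the only extremal tree of this size is $P_5$, which is the seed of $\mathscr{T}$; this settles the base.

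For the inductive step, let $T$ be extremal with $n(T) > 5$. Since $\gamma_{t2}=2$ for every star and every double star while the bound $\frac{2[n(T)-l(T)+2]}{5}$ is strictly less than $2$ for these, we may assume $diam(T)\ge 4$. The argument then splits into two cases, one per operation of $\mathscr{T}$.

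In the first case, some vertex $v$ has at least two leaf-neighbors. Let $u$ be one such leaf and put $T'=T-u$. Using Observation~2.1(ii) one verifies $\gamma_{t2}(T')=\gamma_{t2}(T)$; since $(n(T'),l(T'))=(n(T)-1,l(T)-1)$, the bound is preserved and $T'$ is extremal. By induction there is a labeling $S'$ with $(T',S')\in\mathscr{T}$. The vertex $v$ remains a support vertex of $T'$, so Observation~2.3(a) gives sta$(v)=A$, and operation $\mathscr{O}_1$ reattaches $u$ with status $C$, yielding $(T,S)\in\mathscr{T}$.

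In the second case, every support vertex of $T$ has degree $2$. Choose a longest path $P=v_0v_1\cdots v_t$ with $d(v_3)$ and then $d(v_2)$ maximized. Revisit Claims~2--4 of Theorem~2.2: each alternative with $d(v_i)>2$ produces a reduction $T'$ with $n(T')=n(T)-c'$, $l(T')=l(T)$, and $\gamma_{t2}(T')\le\gamma_{t2}(T)-c$ for explicit small constants $c,c'$, and a short arithmetic check using $\gamma_{t2}(T')\ge\frac{2[n(T')-l(T')+2]}{5}$ forces $\gamma_{t2}(T)$ to strictly exceed its bound, contradicting extremality. Thus $d(v_i)=2$ for $i\in\{1,2,3,4\}$. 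Pick a leaf-free $\gamma_{t2}$-set $D$, WLOG with $v_1,v_3\in D$. The same arithmetic excludes the scenario in which some vertex of $D\setminus\{v_1,v_3\}$ lies within distance $2$ of $v_3$, so we must be in the final scenario with $T'=T-\{v_0,v_1,v_2,v_3,v_4\}$. Turning the inductive inequality into an equality forces both $T'$ to be extremal and $l(T')=l(T)$; since $v_5$ is the unique external neighbor of the removed path, $l(T')=l(T)$ compels $d_T(v_5)=2$, so $v_5$ becomes a leaf of $T'$. By induction, $(T',S')\in\mathscr{T}$, and Observation~2.3(a) gives sta$(v_5)=C$ with $v_5$ of degree one in $T'$, so operation $\mathscr{O}_2$ applied at $v_5$ attaches the path $v_4v_3v_2v_1v_0$ with the prescribed labels, placing $(T,S)\in\mathscr{T}$.

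The principal obstacle is the arithmetic bookkeeping in the second case: one must verify simultaneously that each Claim~2--4 alternative and the within-distance-two alternative are incompatible with extremality, and that the surviving reduction preserves extremality exactly when $d_T(v_5)=2$. This last equivalence is what matches the combinatorial condition that $v_5$ becomes a leaf of $T'$ to the input requirement of $\mathscr{O}_2$ (a degree-one $C$-vertex), closing the induction.
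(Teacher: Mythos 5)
There is a genuine gap in your necessity argument: the case split is not exhaustive. You dichotomize into ``some vertex has at least two leaf-neighbors'' versus ``every support vertex of $T$ has degree $2$,'' but a support vertex with exactly one leaf-neighbor and degree at least $3$ falls into neither case. Such vertices really do occur in extremal trees: starting from $(P_5,S')$, apply $\mathscr{O}_2$ and then $\mathscr{O}_1$ to the $A$-labelled vertex $u_2$ (which is not a support vertex before the leaf is attached); the result lies in $\mathscr{T}$, hence is extremal by Lemma~2.4, has no vertex with two leaf-neighbors, and has a support vertex of degree $3$. You cannot absorb this into your first case either: deleting the unique leaf at such a vertex $v$ does yield an extremal $T'$, but $v$ is no longer a support vertex of $T'$, so Observation~2.3(a) gives you no control over sta$(v)$ in the labeling supplied by induction, and $\mathscr{O}_1$ may not be applicable.

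This omission propagates into your second case. The assertion that $d(v_i)=2$ for all $i\in\{1,2,3,4\}$ is false for extremal trees in general: the paper's own Claim~2 (in the proof of the theorem) establishes only that the extra neighbors of the fourth vertex on the longest path are leaves, not that there are none. Consequently the reconstruction step cannot always be a single application of $\mathscr{O}_2$ at $v_5$; the paper must first apply $\mathscr{O}_2$ at the degree-one $C$-vertex and then apply $\mathscr{O}_1$ to attach the leaf-neighbor of the fourth path vertex. That composed reconstruction, and the preceding structural claim that the offending neighbors are leaves (rather than being impossible), is the missing content. The rest of your outline --- the base case, the reduction $T'=T-\{v_0,\dots,v_4\}$, the arithmetic forcing $v_5$ to become a leaf of $T'$, and the use of Observation~2.3(a) to get sta$(v_5)=C$ --- matches the paper's argument once the indices are shifted by one.
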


\begin{proof}
 The sufficiency follows immediately from Lemma~2.4. So we prove
the necessity only. The proof is by induction on the order of $T$.
The result is immediate for $n\leq 5$. For the inductive hypothesis,
let $n\geq 6$ and moreover, $diam(T)\geq 4$ (If $diam(T)\leq 3$, $T$
is a star or a double star, and then
$\gamma_{t2}(T)>\frac{2[n(T)-l(T)+2]}{5}$, a contradiction). Assume
that for every nontrivial tree $T'$ of order less than $n$ with
$\gamma_{t2}(T')=\frac{2[n(T')-l(T')+2]}{5}$, we have that $(T',
S^{*})\in \mathscr{T}$ for some labeling $S^{*}$. Let $T$ be a tree
of order $n$ satisfying $\gamma_{t2}(T)=\frac{2[n(T)-l(T)+2]}{5}$.
Let $P=v_1v_2\cdots v_t$ be a longest path in $T$ such that

(i) $d(v_4)$ as large as possible, and subject to this condition

(ii) $d(v_3)$ as large as possible.

Let $D$ be a $\gamma_{t2}$-set of $T$ which contains no leaf.

{\flushleft\textbf{Claim 1.}}\quad Each support vertex has exactly
one leaf-neighbor.

\begin{proof}
If not, assume that there is a support vertex $u$ which is adjacent
to at least two leaves. Deleting one of its leaf-neighbors, say
$u_1$, and denote the resulting tree by $T'$. $D$ is still a
semitotal dominating set of $T'$. That is, $\gamma_{t2}(T')\leq
\gamma_{t2}(T)=\frac{2[n(T)-l(T)+2]}{5}=\frac{2[n(T')+1-l(T')-1+2]}{5}=\frac{2[n(T')-l(T')+2]}{5}$.
Combining Theorem~2.2, we have that $\gamma_{t2}(T')=
\frac{2[n(T')-l(T')+2]}{5}$. By the inductive hypothesis, $(T',
S^{*})\in \mathscr{T}$ for some labeling $S^{*}$. Since $u$ is still
a support vertex in $T'$, by Observation~2.3(a), the vertex $u$ has
label $A$ in $S^{*}$. Let $S$ be obtained from the labeling $S^{*}$
by labeling the vertex $u_1$ with label $C$. Then, $(T, S)$ can be
obtained from $(T', S^{*})$ by operation $\mathscr{O}_1$. Thus, $(T,
S)\in \mathscr{T}$. \end{proof}

By Claim~1, we can assume that $d(v_2)=2$. Now, we consider the
vertex $v_3$. If $v_3$ is a support vertex, then $v_2, v_3\in D$.
Let $T'$ be the tree which is obtained from $T$ by subdividing the
edge $v_2v_3$. It is easy to see that $D$ is still a semitotal
dominating set of $T'$, and it means that
$\frac{2[n(T)-l(T)+2]}{5}=\gamma_{t2}(T)\geq \gamma_{t2}(T')
 \geq
 \frac{2[n(T')-l(T')+2]}{5}=\frac{2[n(T)+1-l(T)+2]}{5}=\frac{2[n(T)-l(T)+3]}{5}$,
a contradiction. So, $v_3$ is not a support vertex.

Assume that $d(v_3)\geq 3$. Then, it follows from the choice of $P$
that $v_3$ is adjacent to a support vertex, say $u$, which does not
belong to $P$. Clearly, $u, v_2\in D$. Moreover, $v_3\not \in D$.
Otherwise, we subdivide the edges $v_2v_3$ and $uv_3$, and yield a
similar contradiction as above.

If $u$ is within distance two from a vertex in $D\setminus \{u,
v_2\}$, we have that $\frac{2[n(T)-2-(l(T)-1)+2]}{5}$ $\leq
\gamma_{t2}(T')\leq \gamma_{t2}(T)-1=\frac{2[n(T)-l(T)+2]}{5}-1$,
where $T'=T-\{v_1, v_2\}$. It is impossible. It follows that
$v_4\not \in D$, but in this case, let $T''$ be the component of
$T-v_3v_4$ containing the vertex $v_4$, and
$\frac{2[n(T)-l(T)+2]}{5}=\gamma_{t2}(T)\geq
\gamma_{t2}(T'')+\gamma_{t2}(T-T'')\geq
\frac{2[n(T'')-l(T'')+2]}{5}+\frac{2[n(T-T'')-l(T-T'')+2]}{5}\geq
\frac{2[n(T)-(l(T)+1)+4]}{5}=\frac{2[n(T)-l(T)+3]}{5}$, a
contradiction. Therefore, $d(v_3)=2$.

From the choice of $D$, $v_2\in D$, and without loss of generality,
$v_4\in D$ (If $v_4\not \in D$, then $v_3\in D$, replacing $v_3$ in
$D$ with $v_4$, and we obtain a new $\gamma_{t2}$-set of $T$).

Assume that $d(v_4)\geq 3$. We have that the following conclusion.

{\flushleft\textbf{Claim 2.}}\quad $N(v_4)\setminus \{v_3,
v_5\}\subset L(T)$.

\begin{proof}
Assume that there exists a vertex $v_3'\in N(v_4)\setminus \{v_3,
v_5\}$ which is not a leaf, it follows from the choice of $P$ and
Claim~1 that $v_3'$ is either a support vertex or adjacent to a
support vertex outside $P$, say $v_2'$. In particular, $d(v_3')=2$
(From the choice of $P$). In either case, let $T'=T-\{v_1, v_2\}$.
Observe that $n(T)=n(T')+2$, $l(T)=l(T')$, $\gamma_{t2}(T')\leq
\gamma_{t2}(T)-1$. Then, we have that $\gamma_{t2}(T')\leq
\gamma_{t2}(T)-1=\frac{2[n(T)-l(T)+2]}{5}-1=\frac{2[n(T')+2-l(T')+2]}{5}-1=\frac{2[n(T')-l(T')+2]}{5}-\frac{1}{5}<\frac{2[n(T')-l(T')+2]}{5}$,
contradicting Theorem~2.2. It concludes that $\emptyset \neq
N(v_4)\setminus \{v_3, v_5\}\subset L(T)$.\end{proof}

So $d(v_4)=2$ or $N(v_4)\setminus \{v_3, v_5\}\subset L(T)$.
Moreover, all vertices in $D\setminus \{v_2, v_4\}$ are distance at
least three from $v_4$ (If not, let $T'=T-\{v_1, v_2\}$. Observe
that $n(T)=n(T')+2$, $l(T)=l(T')$, $\gamma_{t2}(T')\leq
\gamma_{t2}(T)-1$. We can obtain a contradiction by an argument
similar to the proof of Claim~2).

If $d(v_5)=1$, by Claim~1 and the choice of $P$, $T=P_5$,
contradicting the assumption that $n\geq 6$. So assume that
$d(v_5)\geq 3$, since all vertices in $D\setminus \{v_2, v_4\}$ are
distance at least three from $v_4$, each neighbor of $v_5$ is
neither a leaf nor a support vertex. From the choice of $P$ and
Claim~1, we only need to consider the case as follows: $v_5$ has a
neighbor outside $P$, say $v_4'$, which is adjacent to $t$ support
vertices $u_1, u_2, \cdots, u_t$, where $t\geq 2$. (In other cases,
let $T'=T-\{v_1, v_2\}$. Observe that $n(T)=n(T')+2$, $l(T)=l(T')$,
$\gamma_{t2}(T')\leq \gamma_{t2}(T)-1$. We can always obtain
contradictions by an argument similar to the proof of Claim~2). Let
$u_i'$ be the leaf-neighbor of $u_i$, where $i=1, 2, \cdots, t$, and
$T'=T-\{u_1', u_2', \cdots, u_t'\}$. Note that $\{u_1, u_2, \cdots,
u_t\}\subseteq D$. Then $(D\setminus \{u_1, u_2, \cdots, u_t\})\cup
\{v_4'\}$ is a semitotal dominating set of $T'$. That is,
$\gamma_{t2}(T')\leq \gamma_{t2}(T)-t+1$. In addition, $l(T')=l(T)$,
$n(T')=n(T)-t$. Hence, $\gamma_{t2}(T')\leq
\gamma_{t2}(T)-t+1=\frac{2[n(T)-l(T)+2]}{5}-t+1=\frac{2[n(T')+t-l(T')+2]}{5}-t+1=
\frac{2[n(T')-l(T')+2]}{5}+\frac{2t}{5}-t+1<\frac{2[n(T')-l(T')+2]}{5}$,
contradicting Theorem 2.2. Therefore, $d(v_5)=2$.

Let $T'$ be the component of $T-v_5v_6$ containing $v_6$. If $v_6$
is not a leaf in $T'$, then $n(T)=n(T')+5+s$, $l(T)=l(T')+1+s$,
where $s$ is the number of the leaf-neighbors of $v_4$. Since all
vertices in $D\setminus \{v_2, v_4\}$ are distance at least three
from $v_4$, $\gamma_{t2}(T')\leq \gamma_{t2}(T)-2$. It follows that
$\gamma_{t2}(T')\leq
\frac{2[n(T)-l(T)+2]}{5}-2=\frac{2[n(T')+5+s-l(T')-1-s+2]}{5}-2=\frac{2[n(T')-l(T')+2]}{5}-\frac{2}{5}<\frac{2[n(T')-l(T')+2]}{5}$,
contradicting Theorem 2.2. It means that $v_6$ is a leaf in $T'$,
and $\gamma_{t2}(T')\leq \gamma_{t2}(T)-2
=\frac{2[n(T)-l(T)+2]}{5}-2=\frac{2[n(T')+5+s-l(T')-s+2]}{5}-2=\frac{2[n(T')-l(T')+2]}{5}$.
Combining Theorem~2.2, we have that $\gamma_{t2}(T')=
\frac{2[n(T')-l(T')+2]}{5}$. By the inductive hypothesis, $(T',
S^{*})\in \mathscr{T}$ for some labeling $S^{*}$. Since $v_6$ is a
leaf in $T'$, by Observation~2.3(a), the vertex $v_6$ has label $C$
in $S^{*}$.

If $d(v_4)=2$, let $S$ be obtained from the labeling $S^{*}$ by
labeling the vertices $v_1$ and $v_5$ with label $C$, the vertices
$v_2$ and $v_4$ with label $A$, the vertex $v_3$ with label $B$.
Then, $(T, S)$ can be obtained from $(T', S^{*})$ by operation
$\mathscr{O}_2$. Thus, $(T, S)\in \mathscr{T}$.

If $\emptyset \neq N(v_4)\setminus \{v_3, v_5\}\subset L(T)$, by
Claim~1, $v_4$ has exactly one leaf-neighbor. Let $S^{*}_1$ be
obtained from the labeling $S^{*}$ by labeling the vertices $v_1$
and $v_5$ with label $C$, the vertices $v_2$ and $v_4$ with label
$A$, the vertex $v_3$ with label $B$. $S$ be obtained from the
labeling $S^{*}_1$ by labeling the leaf-neighbor of $v_4$ with label
$C$. Then, $(T'', S^{*}_1)$ can be obtained from $(T', S^{*})$ by
operation $\mathscr{O}_2$, and $(T, S)$ can be obtained from $(T'',
S^{*}_1)$ by operation $\mathscr{O}_1$, where $T''$ is obtained from
$T$ by deleting the leaf-neighbor of $v_4$. Thus, $(T, S)\in
\mathscr{T}$.
\end{proof}

%----------------------------------------------------------------------------------------------------------------------------
\section{A characterization of ($\gamma, \gamma_{t2}$)-trees}
\label{sec:a char}

Before presenting a characterization of ($\gamma,
\gamma_{t2}$)-trees, we shall need some additional notation.

Take a star with the center vertex $x$. A subdivided star, denoted
by $X$, is obtained from the star by subdividing all edges once. And
the tree obtained from the star by subdividing exactly one of the
edges once is denoted by $Y$.

An \emph{almost dominating set} (ADS) of $G$ relative to a vertex
$v$ is a set of vertices of $G$ that dominates all vertices of $G$,
except possibly for $v$. The \emph{almost domination number} of $G$
relative to $v$, denoted $\gamma(G; v)$, is the minimum cardinality
of an ADS of $G$ relative to $v$. An ADS of $G$ relative to $v$ of
cardinality $\gamma(G; v)$ we call a $\gamma(G; v)$-set.

In order to state the characterization of trees with equal
domination and semitotal domination numbers, we introduce the four
types of operations as follows.

{\bf Operation} $\mathscr{O}_1$: Add a path $P_1$ and join it to a
vertex of $T$, which is in some $\gamma_{t2}$-set of $T$.

{\bf Operation} $\mathscr{O}_2$: Add a path $P_2$ or $P_5$ and join
one of its leaves to a vertex $v$ of $T$, where $\gamma(T;
v)=\gamma(T)$.

{\bf Operation $\mathscr{O}_3$}: Add a subdivided star $X$ with at
least two leaves and join the center vertex $x$ to a vertex of $T$.

{\bf Operation $\mathscr{O}_4$}: Add $Y$ with three leaves and join
a leaf-neighbor of the center vertex $x$ to a vertex of $T$.

We define the family $\mathscr{O}$ as:

$\mathscr{O}=\{T|T$ is obtained from $P_4$ by a finite sequence of
operations $\mathscr{O}_i$, $i=1, 2, 3, 4\}$. We show first that
every tree in the family $\mathscr{O}$ has equal domination and
semitotal domination numbers.

\begin{lem}
If $T\in \mathscr{O}$, then $T$ is a $(\gamma, \gamma_{t2})$-tree.
\end{lem}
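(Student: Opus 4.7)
The plan is to prove the lemma by induction on the number $k$ of operations used to build $T$ from $P_4$. The base case $k=0$, i.e., $T=P_4$, is a direct check: $\gamma(P_4)=2=\gamma_{t2}(P_4)$, so $P_4$ is a $(\gamma,\gamma_{t2})$-tree.

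For the inductive step, let $T'\in \mathscr{O}$ satisfy $\gamma(T')=\gamma_{t2}(T')$ by hypothesis, and suppose $T$ arises from $T'$ by one of $\mathscr{O}_1,\mathscr{O}_2,\mathscr{O}_3,\mathscr{O}_4$ applied at a vertex $v\in V(T')$. In each case I aim to exhibit a nonnegative integer $c$ and to establish the two inequalities
\begin{align*}
\gamma_{t2}(T)&\leq \gamma_{t2}(T')+c,\\
\gamma(T)&\geq \gamma(T')+c.
\end{align*}
Combined with $\gamma(T)\leq \gamma_{t2}(T)$ and the inductive hypothesis, these produce the chain
\[\gamma(T)\leq \gamma_{t2}(T)\leq \gamma_{t2}(T')+c=\gamma(T')+c\leq \gamma(T),\]
forcing $\gamma(T)=\gamma_{t2}(T)$.

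The upper bound on $\gamma_{t2}(T)$ is obtained by extending a $\gamma_{t2}$-set $D'$ of $T'$ by an explicit small subset of the attached structure $H$, chosen to both dominate $H$ and to preserve the semitotal distance-two condition. For $\mathscr{O}_1$ I take $D'$ to contain $v$ (which is possible by the operation's hypothesis), so $c=0$. For $\mathscr{O}_2$ I add $\{u_1\}$ in the $P_2$ case ($c=1$) and $\{u_2,u_4\}$ in the $P_5$ case ($c=2$), noting that $u_2$ and $u_4$ lie at distance two through $u_3$. For $\mathscr{O}_3$ I add the $k\geq 2$ middle vertices of the subdivided star, any two of which lie at distance two through the center, giving $c=k$. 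For $\mathscr{O}_4$ I add $\{x,m\}$, where $m$ is the subdivision vertex of $Y$, giving $c=2$. In each case the dominating and semitotal properties are routine to verify.

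The lower bound $\gamma(T)\geq \gamma(T')+c$ is the delicate direction. For a $\gamma(T)$-set $D$, I decompose $D=(D\cap V(T'))\cup (D\cap V(H))$ and bound each piece separately. The leaves of $H$ (including those of the subdivided star or of $Y$) force $|D\cap V(H)|$ to meet a minimum depending on the operation. Meanwhile $|D\cap V(T')|\geq \gamma(T')$ when $v$ is dominated by a vertex of $D\cap V(T')$, and $|D\cap V(T')|\geq \gamma(T';v)$ when $v$ is dominated only through $H$. The universal inequality $\gamma(T';v)\geq \gamma(T')-1$ compensates for the one-vertex deficit in the second case, since that scenario simultaneously forces one extra vertex inside $H$; for $\mathscr{O}_2$ the sharper hypothesis $\gamma(T';v)=\gamma(T')$ is invoked directly. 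The main obstacle will be organizing this case analysis cleanly, especially for $\mathscr{O}_3$ (where one splits on whether the star center $x$ belongs to $D$) and $\mathscr{O}_4$ (where one splits on which of $v$, $l_1$, $x$ dominates the bridge leaf $l_1$). Once the two inequalities are established in each operation, the conclusion $\gamma(T)=\gamma_{t2}(T)$ follows from the chain above, completing the induction.
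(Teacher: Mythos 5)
Your proposal is correct and follows essentially the same route as the paper: induction on the number of operations, the upper bound $\gamma_{t2}(T)\leq\gamma_{t2}(T')+c$ via an explicit extension of a $\gamma_{t2}(T')$-set, the lower bound $\gamma(T)\geq\gamma(T')+c$ with the same constants $c$ in each case, and the squeeze $\gamma(T)\leq\gamma_{t2}(T)\leq\gamma_{t2}(T')+c=\gamma(T')+c\leq\gamma(T)$. The only cosmetic difference is that the paper derives the lower bound from a leafless $\gamma(T)$-set (Observation~2.1(i)) where you decompose an arbitrary $\gamma(T)$-set and invoke $\gamma(T';v)\geq\gamma(T')-1$; both work.
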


\begin{proof} The proof is by induction on the number $h(T)$ of operations
required to construct the tree $T$. Observe that $T=P_4$ when
$h(T)=0$, and clearly $\gamma(T)=\gamma_{t2}(T)$. This establishes
the base case. Assume that $k\geq 1$ and each tree $T'\in
\mathscr{O}$ with $h(T')<k$ is a $(\gamma, \gamma_{t2})$-tree. Let
$T\in \mathscr{O}$ be a tree with $h(T)=k$. Then $T$ can be obtained
from a tree $T'\in \mathscr{O}$ with $h(T')<k$ by one of the
operations $\mathscr{O}_i$, $i=1, 2, 3, 4$. By induction, $T'$ is a
$(\gamma, \gamma_{t2})$-tree. By Observation~2.1(i), we can obtain a
$\gamma$-set of $T$, say $S$, which contains no leaf. Now we can
distinguish four cases as follows:

{\flushleft\textbf{Case 1.}}\quad $T$ is obtained from $T'$ by
operation $\mathscr{O}_1$.

In this case, $T$ is obtained from $T'$ by adding a path $P_1$ and
joining it to a vertex of $T'$, which is in some $\gamma_{t2}$-set
of $T'$, say $D'$. Note that $D'$ is also a semitotal dominating set
of $T$. That is, $\gamma_{t2}(T')\geq \gamma_{t2}(T)$. On the other
hand, we have that $\gamma(T')=\gamma_{t2}(T')$. Moreover, since the
set $S$ contains no leaf of $T$, we have that $S$ is a dominating
set of $T'$, and then $\gamma(T')\leq \gamma(T)$. Hence,
$\gamma(T)\leq \gamma_{t2}(T)\leq \gamma_{t2}(T')=\gamma(T')\leq
\gamma(T)$. Consequently we must have equality throughout this
inequality chain. In particular, $\gamma(T)=\gamma_{t2}(T)$.

{\flushleft\textbf{Case 2.}}\quad $T$ is obtained from $T'$ by
operation $\mathscr{O}_2$.

First, suppose that $T$ is obtained from $T'$ by adding a path $P_2$
and joining one of its vertices, say $u$, to a vertex $v$ of $T'$,
where $\gamma(T'; v)=\gamma(T')$. Let $D'$ be a $\gamma_{t2}$-set of
$T'$. Clearly, $D'\cup \{u\}$ is a semitotal dominating set of $T$.
That is, $\gamma_{t2}(T)\leq \gamma_{t2}(T')+1$. On the other hand,
because $u\in S$, the set $S\setminus \{u\}$ can dominate all
vertices of $T'$, except possibly the vertex $v$. It follows from
the condition $\gamma(T'; v)=\gamma(T')$ that $\gamma(T)-1\geq
\gamma(T')$. Therefore, $\gamma(T)\leq \gamma_{t2}(T)\leq
\gamma_{t2}(T')+1=\gamma(T')+1\leq \gamma(T)$. It means that
$\gamma(T)=\gamma_{t2}(T)$.

Next, suppose that $T$ is obtained from $T'$ by adding a path $P_5$
and joining one of its leaves to a vertex $v$ of $T$, where
$\gamma(T; v)=\gamma(T)$. Analogously to the previous arguments, we
can deduce that $\gamma(T)=\gamma_{t2}(T)$.

{\flushleft\textbf{Case 3.}}\quad $T$ is obtained from $T'$ by
operation $\mathscr{O}_3$.

In this case, $T$ is obtained from $T'$ by adding a subdivided star
$X$ with at least two leaves and joining the center vertex $x$ to a
vertex of $T'$. The set $D_1$ consists of a $\gamma_{t2}$-set of
$T'$ together with all support vertices of $X$. Clearly, $D_1$ is a
semitotal dominating set of $T$. Assume that $X$ contains $t$ leaves
($t\geq 2$). Then, $\gamma_{t2}(T)\leq \gamma_{t2}(T')+t$. Moreover,
it is easy to see that $\gamma(T)-t\geq \gamma(T')$. So,
$\gamma(T)\leq \gamma_{t2}(T)\leq \gamma_{t2}(T')+t=\gamma(T')+t\leq
\gamma(T)$. Consequently we must have equality throughout this
inequality chain. In particular, $\gamma(T)=\gamma_{t2}(T)$.

{\flushleft\textbf{Case 4.}}\quad $T$ is obtained from $T'$ by
operation $\mathscr{O}_4$.

In this case, we can prove $\gamma(T)=\gamma_{t2}(T)$ similar to the
proof of Case~3. \end{proof}

\begin{lem}
If $T$ is a $(\gamma, \gamma_{t2})$-tree, then $T\in \mathscr{O}$.
\end{lem}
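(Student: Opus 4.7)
The plan is to induct on the order $n(T)$. For the base case, one checks by inspection that for $n(T)\le 4$ the only $(\gamma,\gamma_{t2})$-tree is $P_{4}$, which belongs to $\mathscr{O}$ by definition of the family. For the inductive step, let $T$ be a $(\gamma,\gamma_{t2})$-tree with $n(T)\ge 5$ and assume the conclusion for all smaller trees. I root $T$ at one endpoint of a longest path $P=v_{1}v_{2}\cdots v_{t}$ (so $t\ge 4$) and isolate a subtree near $v_{1}$ whose removal yields a smaller tree $T'$; the chosen operation depends on the local picture around $v_{2}$, $v_{3}$, $v_{4}$.

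First, if $v_{2}$ has at least two leaf-neighbors, put $T'=T-v_{1}$. Since $v_{2}$ remains a support vertex in $T'$, every $\gamma$-set of $T'$ can be assumed to contain $v_{2}$, which gives $\gamma(T)=\gamma(T')$; by Observation~2.1(ii) a leaf-free $\gamma_{t2}$-set of $T$ restricts to a $\gamma_{t2}$-set of $T'$, and conversely, so $\gamma_{t2}(T)=\gamma_{t2}(T')$. Hence $T'$ is a $(\gamma,\gamma_{t2})$-tree, and because $v_{2}\in S(T')$ lies in some $\gamma_{t2}$-set of $T'$, $T$ is obtained from $T'$ by operation $\mathscr{O}_{1}$ and induction finishes this case. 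Now assume $d(v_{2})=2$, so the subtree below $v_{2}$ is the pendant $P_{2}=v_{1}v_{2}$. A careful choice of $P$ (longest, then $d(v_{4})$ and $d(v_{3})$ maximum, as in Theorem~2.2) restricts the structure at $v_{3}$ to a short list of possibilities: either $v_{3}$ has several pendant $P_{2}$'s or a combination of pendant $P_{2}$'s and one pendant leaf attached to it (giving a subdivided star $X$ with $\ge 2$ leaves, or the tree $Y$ with three leaves, hanging at $v_{3}$), or the only subtree below $v_{3}$ is the $P_{2}=v_{1}v_{2}$, or one must go further up to $v_{4},v_{5}$.

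In the subdivided-star subcase, I delete this subtree to form $T'$ and recover $T$ by $\mathscr{O}_{3}$; when the same subtree is a copy of $Y$, the deletion corresponds to $\mathscr{O}_{4}$. When $v_{3}$ has only $v_{1}v_{2}$ below it, I set $T'=T-\{v_{1},v_{2}\}$ and recover $T$ by $\mathscr{O}_{2}$ with a pendant $P_{2}$ attached at $v_{3}$; if the analysis is forced further along the path (a forbidden substructure at $v_{3},v_{4}$), I instead strip the entire $P_{5}$-segment $v_{1}\cdots v_{5}$ and apply $\mathscr{O}_{2}$ with $P_{5}$. In every subcase I need to show the reduction preserves the equality $\gamma(T')=\gamma_{t2}(T')$, which I do by sandwiching: upper bounds on $\gamma(T')$ and $\gamma_{t2}(T')$ come from restricting a minimum set of $T$, and the matching lower bounds come from extending a minimum set of $T'$ back to $T$ using the obvious vertices on the removed subtree.

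The main obstacle, and the place that genuinely requires work, is verifying the side conditions attached to the operations in $\mathscr{O}$: namely, that the attachment vertex in $T'$ lies in some $\gamma_{t2}$-set (for $\mathscr{O}_{1}$) and that $\gamma(T';v)=\gamma(T')$ (for $\mathscr{O}_{2}$). The condition for $\mathscr{O}_{1}$ follows because the attachment vertex is always forced to be a support vertex of $T'$. The condition for $\mathscr{O}_{2}$ is proved by contradiction: if there were an almost dominating set $A$ of $T'$ relative to $v$ with $|A|<\gamma(T')$, then enlarging $A$ by a single well-chosen vertex of the removed $P_{2}$ or $P_{5}$ (one that dominates $v$ together with the new pendant piece) would produce a dominating set of $T$ of cardinality less than $\gamma(T)=\gamma_{t2}(T)$, contradicting the hypothesis. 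A secondary bookkeeping task is showing that the structural classification around $v_{2},v_{3},v_{4}$ exhausts all cases, which is why the specific tie-breaking rules in the choice of $P$ matter. Once the side conditions are established, applying the induction hypothesis to $T'$ and then the appropriate operation to reattach the removed subtree places $T$ into $\mathscr{O}$ and completes the proof.
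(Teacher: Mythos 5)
Your overall strategy --- induction on $n(T)$, a longest path with degree tie-breaks, local reductions matching the four operations, and a sandwich argument to transfer $\gamma=\gamma_{t2}$ to the smaller tree --- is the same as the paper's. But the case division you propose for deciding which reduction to perform is purely structural (what hangs below $v_2,v_3,v_4$), and as stated one of its branches fails. You claim that when $v_3$ has only the pendant $P_2=v_1v_2$ below it you may set $T'=T-\{v_1,v_2\}$ and recover $T$ by $\mathscr{O}_2$. Take $T=P_{10}$: it is a $(\gamma,\gamma_{t2})$-tree ($\gamma=\gamma_{t2}=4$) with $d(v_3)=d(v_4)=2$, yet $T'=P_8$ satisfies $\gamma(P_8)=3<4\le\gamma_{t2}(P_8)$ (the lower bound of Theorem~2.2 gives $\gamma_{t2}(P_8)\ge 16/5$), so $T'$ is not a $(\gamma,\gamma_{t2})$-tree, the induction hypothesis cannot be applied, and indeed $P_8\notin\mathscr{O}$ by Lemma~3.1; here one must instead strip the whole segment $v_1\cdots v_5$ and use $\mathscr{O}_2$ with $P_5$. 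The correct trigger, which your sketch never identifies, is not structural: fixing a leaf-free $\gamma_{t2}$-set $D$ of $T$, one removes $\{v_1,v_2\}$ exactly when some vertex of $D\setminus\{v_1,v_3\}$ lies within distance $2$ of $v_3$ (so that $D\setminus\{v_1\}$ remains a semitotal dominating set of $T'$ and the sandwich closes), and only in the complementary case does one remove the subdivided star, the copy of $Y$, or the $P_5$. This dichotomy on $D$ is the engine of the paper's argument and is the missing idea here; ``a forbidden substructure at $v_3,v_4$'' does not capture it.

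Two smaller points. First, the reduction of support vertices with two or more leaf-neighbors via $\mathscr{O}_1$ must be carried out for every support vertex of $T$ (as the paper does), not only for $v_2$; otherwise the pendant subtrees at $v_3$ need not be subdivided stars and your structural classification around $v_3$ is not exhaustive. Second, in verifying $\gamma(T';v)=\gamma(T')$ for the $P_5$ attachment, a single added vertex cannot dominate the new $P_5$ together with $v$; you need to add two vertices and compare against $\gamma(T)=\gamma(T')+2$.
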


\begin{proof} We only need to consider the case that $n(T)\geq 6$ and
$diam(T)\geq 4$. Otherwise, $T=P_4$ or $T$ can be obtained from
$P_4$ by repeated applications of operation $\mathscr{O}_1$. We
proceed by induction on the order $n(T)$ of a $(\gamma,
\gamma_{t2})$-tree $T$. Assume that the result is true for all
$(\gamma, \gamma_{t2})$-tree $T'$ of order $n(T')<n(T)$. By
Observation~2.1(ii), we can obtain a $\gamma_{t2}$-set of $T$, say
$D$, which contains no leaf. Let $P=v_0v_1v_2\cdots v_s$ be a
longest path of $T$ such that

(i) $d(v_3)$ as large as possible, and subject to this condition

(ii) $d(v_2)$ as large as possible.

Let $z$ be a support vertex of $T$ which has at least two
leaf-neighbors. We remove one of these leaves and denote the
resulting tree by $T'$. Note that $D$ is still a semitotal
dominating set of $T'$. That is, $\gamma_{t2}(T')\leq
\gamma_{t2}(T)$. By Observation~2.1(i), there is a $\gamma$-set of
$T'$, say $S'$, which contains no leaf. Clearly, $z\in S'$ and then
$S'$ is also a dominating set of $T$. Therefore, $\gamma(T')\leq
\gamma_{t2}(T')\leq \gamma_{t2}(T)=\gamma(T)\leq \gamma(T')$.
Consequently we must have equality throughout this inequality chain.
In particular,  $\gamma(T')= \gamma_{t2}(T')$ and $z$ is in a
$\gamma_{t2}(T')$-set. By induction, $T'\in \mathscr{O}$. And then,
$T$ is obtained from $T'$ by operation $\mathscr{O}_1$. So, we
assume that each support vertex of $T$ is adjacent to exactly one
leaf, for otherwise, we are done. For this reason, $d(v_1)=2$.

We can distinguish two cases as follows.

{\flushleft\textbf{Case 1.}}\quad $v_2$ is a support vertex of $T$.

In this case, $v_1, v_2\in D$. Because of $diam(T)\geq 4$, $|D|\geq
3$. And then, one of the two cases as following holds: (1) Each
vertex of $D\setminus \{v_1, v_2\}$ is at distance at least $3$ from
$v_2$; (2) There is a vertex of $D\setminus \{v_1, v_2\}$ which is
within distance $2$ of $v_2$.

In the former case, if $d(v_3)\geq 3$, let $v_2'$ be a neighbor of
$v_3$ outside $P$. From the choice of $P$ and $D$, it is not
difficult to verify that the component of $T-v_2'v_3$ containing the
vertex $v_2'$ is a subdivided star with at least two leaves, say
$X$. Suppose that $X$ contains $t$ leaves. The set obtained by
deleting all support vertices of $X$ from $D$ is denoted by $D'$, is
still a semitotal dominating set of $T-X$. So, $\gamma_{t2}(T-X)\leq
\gamma_{t2}(T)-t$. On the other hand, the set consists of a
$\gamma$-set of $T-X$ together with all support vertices of $X$ is a
dominating set of $T$. For this reason, $\gamma(T)\leq
\gamma(T-X)+t$. Therefore, $\gamma(T-X)\leq \gamma_{t2}(T-X)\leq
\gamma_{t2}(T)-t=\gamma(T)-t\leq \gamma(T-X)$. It concludes that
$\gamma(T-X)= \gamma_{t2}(T-X)$. By induction, $T-X\in \mathscr{O}$.
Then, $T$ is obtained from $T-X$ by operation $\mathscr{O}_3$. If
$d(v_3)=2$, then the component of $T-v_3v_4$ containing $v_3$ is a
tree $Y$ with three leaves. With a similar discussion as above, one
can prove that $T$ is obtained from $T-Y$ by operation
$\mathscr{O}_4$.

In the latter case, let $T'=T-\{v_0, v_1\}$. Clearly, the inequality
chain $\gamma(T')\leq \gamma_{t2}(T')\leq
\gamma_{t2}(T)-1=\gamma(T)-1\leq \gamma(T')$ holds. And then,
$\gamma(T')= \gamma_{t2}(T')$. By induction, $T'\in \mathscr{O}$.
Further, we have that $\gamma(T)=\gamma(T')+1\geq \gamma(T';
v_2)+1\geq \gamma(T)$. That is, $\gamma(T')=\gamma(T'; v_2)$. Hence,
$T$ is obtained from $T'$ by operation $\mathscr{O}_2$.

{\flushleft\textbf{Case 2.}}\quad $v_2$ is not a support vertex of
$T$.

In this case, if $d(v_2)\geq 3$, then all neighbors of $v_2$ outside
$P$ are support vertices, each of which has exactly one
leaf-neighbor. Clearly, the component of $T-v_2v_3$ containing the
vertex $v_2$ is a subdivided star with at least two leaves. Let $D'$
be the set which is obtained from $D$ by deleting all support
vertices of the subdivided star. Next, one of the two cases as
following holds: (1) Each vertex of $D'$ is at distance at least $3$
from $v_1$; (2) There is a vertex of $D'$ which is within distance
$2$ of $v_1$. In both cases, the same arguments as Case~1 shows that
$T\in \mathscr{O}$.

We may assume that $d(v_2)=2$ by means of the above discussion.
Without loss of generality, $v_2\not \in D$ (Otherwise, replacing
$v_2$ in $D$ with $v_3$, and the resulting set is also a
$\gamma_{t2}$-set of $T$), and then $v_3\in D$. We may assume that
$|D|\geq 3$, for otherwise, we are done.

If there exists a vertex of $D\setminus \{v_1, v_3\}$ is within
distance $2$ of $v_3$. Analogously to Case~1, $T$ is obtained from
$T'$ by operation $\mathscr{O}_2$, where $T'=T-\{v_0, v_1\}$, and
$T\in \mathscr{O}$.

Thus, each vertex of $D\setminus \{v_1, v_3\}$ is at distance at
least $3$ from $v_3$. From the choice of $P$ and $D$, $v_3$ has only
one neighbor outside $P$ which is a leaf or $d(v_3)=2$.

In the former case, we consider $T'=T-v_0$ and it is easy to show
that $T\in \mathscr{O}$. In the latter case, suppose that
$d(v_4)\geq 3$ and let $v_3'$ be a neighbor of $v_4$ outside $P$.
From the choice of $v_2$ , $v_3$ and $D$, the component of
$T-v_3'v_4$ containing $v_3'$ is either a subdivided star or a
$P_4$. We only need to consider the second case. Let $v_2'$ be the
neighbor of $v_3'$ on the $P_4$, and $v_1'$ be the remaining
neighbor of $v_2'$ on the $P_4$. Clearly, $v_1'\in D$. Since each
vertex of $D\setminus \{v_1, v_3\}$ is at distance at least $3$ from
$v_3$, $v_2'\in D$. Replacing $v_2'$ in $D$ with $v_3'$, and the
resulting set is also a $\gamma_{t2}$-set of $T$. Take $T'=T-\{v_0,
v_1\}$, and it can be deduced that $T'\in \mathscr{O}$ and $T$ is
obtained from $T'$ by operation $\mathscr{O}_2$.

Hence, we may assume that $d(v_4)=2$. We know that $v_1, v_3\in D$.
Because each vertex of $D\setminus \{v_1, v_3\}$ is at distance at
least $3$ from $v_3$. Then, let $T'=T-\{v_0, v_1, v_2, v_3, v_4\}$.
Observe that $D\setminus \{v_1, v_3\}$ is a semitotal dominating set
of $T'$. Moreover, we have that $\gamma(T')\leq \gamma_{t2}(T')\leq
\gamma_{t2}(T)-2=\gamma(T)-2\leq \gamma(T')$. Thus, $\gamma(T')=
\gamma_{t2}(T')$. By induction, $T'\in \mathscr{O}$. In addition,
let $D'$ be a $\gamma(T'; v_5)$-set of $T'$ and $D''=D'\cup \{v_1,
v_4\}$. We can see that $D''$ dominates all vertices of $T$. That
is, $\gamma(T'; v_5)+2\geq \gamma(T)$. It follows from
$\gamma(T)=\gamma(T')+2\geq \gamma(T'; v_5)+2\geq \gamma(T)$ that
$\gamma(T'; v_5)=\gamma(T')$. Hence, $T$ is obtained from $T'$ by
operation $\mathscr{O}_2$.

The proof is completed. \end{proof}

As an immediate consequence of Lemmas~3.1 and 3.2 we have the
following characterization of $(\gamma, \gamma_{t2})$-trees.

\begin{thm}
A tree $T$ is a $(\gamma, \gamma_{t2})$-tree if and only if $T\in
\mathscr{O}$.
\end{thm}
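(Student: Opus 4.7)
The plan is to deduce Theorem 3.3 directly from Lemmas 3.1 and 3.2, which together furnish the two halves of the biconditional. Since both lemmas have already been established, essentially no additional work is required beyond packaging them together.

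For the sufficiency direction, I would invoke Lemma 3.1: it states that every $T\in \mathscr{O}$ satisfies $\gamma(T)=\gamma_{t2}(T)$, which is exactly the definition of a $(\gamma, \gamma_{t2})$-tree. For the necessity direction, I would invoke Lemma 3.2 in the same manner: it asserts that any $(\gamma, \gamma_{t2})$-tree lies in $\mathscr{O}$. Combining the two implications yields the biconditional asserted in Theorem 3.3.

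Consequently, there is no genuine obstacle in the proof of Theorem 3.3 itself; the entire substantive content of the characterization has already been absorbed into the two preceding lemmas. The difficult part was Lemma 3.2, whose proof proceeds by induction on $n(T)$ and requires a delicate case analysis along a longest path $v_0v_1\cdots v_s$, subject to the choice conditions on $d(v_3)$ and $d(v_2)$, in order to reverse-engineer which of the four operations $\mathscr{O}_1, \mathscr{O}_2, \mathscr{O}_3, \mathscr{O}_4$ was applied last. Lemma 3.1 contributes a complementary induction on the number of operations $h(T)$ required to build $T$, verifying in each of the four cases that the identities $\gamma(T)=\gamma_{t2}(T)$ are preserved via sandwich inequalities of the form $\gamma(T)\le \gamma_{t2}(T)\le \gamma_{t2}(T')+c = \gamma(T')+c \le \gamma(T)$.

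In summary, my proof of Theorem 3.3 would be a one-line appeal: sufficiency is Lemma 3.1 and necessity is Lemma 3.2, so the two statements $\gamma(T)=\gamma_{t2}(T)$ and $T\in \mathscr{O}$ are equivalent for every tree $T$.
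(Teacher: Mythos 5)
Your proposal is correct and matches the paper exactly: the paper states Theorem 3.3 as an immediate consequence of Lemmas 3.1 and 3.2, with Lemma 3.1 supplying sufficiency and Lemma 3.2 supplying necessity, which is precisely your one-line appeal.
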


\acknowledgements We thank two reviewers for giving helpful
suggestions and comments to improve the manuscript.

%%%%%%%%%%%%%%%%%%%%%%%%%%%%%%%%%%%%%%%%%%%%%%%%%%%%%%%%%%%%%%%%%%%%%%%

\end{document}